\def\math#1{{\mathbb{#1}}}
\def\CM{\math C}
\def\bs{\boldsymbol}
\def\bsl{\backslash}
\def\mun{^{-1}}
\def\goth{\mathfrak} 
\def\ga{\goth a}
\def\trunc{\mathbf{\Lambda}^T}
\def\ptf{\,\,.}
\def\com#1{\!\!\quad\hbox{#1}\quad\!\!}
\def\coma#1{\,\,\hbox{#1}\,\,}
\def\comm#1{\qquad\hbox{#1}\qquad}
\def\fp{{\mathfrak p}}
\def\fq{{\mathfrak q}}
\def\hmu{\mu}
\def\omu{\lambda}
\def\hD{\widehat\Delta}
\def\oD{\Delta}
\def\hF{\hat F}
\def\oF{F}
\def\hE{ \hat E}
\def\E{ E}
\def\hg{{\widehat\eta}}
\def\ob{b}
\def\hb{{\widehat b}}
\def\og{\eta}
\def\mT{\theta}
\def\oT{{\mT}}
\def\hT{\widehat{\oT}}
\def\oV{V}
\def\hV{U}
\def\lb{\langle}
\def\rb{\rangle}
\def\D{D}
\def\L{\Lambda}
\def\Lu{{\Lambda_1}}
\def\Ld{{\Lambda_2}}
\def\Ld{{\L_2}}
 \title{A pleasant exercise : Langlands' Boulder Lemma}
\author{\firstname{Jean-Pierre} \lastname{Labesse}}
\address{Institut Math\'ematique de Luminy\\ UMR 7373}
\keywords{Langlands combinatorial lemma}
\begin{document}

\begin{abstract}In the Proceedings of the AMS Boulder conference in 1965 Langlands states a combinatorial lemma 
involving families of characteristic functions attached to ordered partitions of an obtuse basis in a
finite dimensional euclidean vector space. Langlands does not give any indication about the proof of
the lemma which is said to be a ``pleasant exercise''. 
Since we did not find a proof in the literature we decided to give one. We believe it of some
interest for the history of the subject.
\end{abstract}

\begin{altabstract}Dans les Proceedings de la confŽrence de l'AMS ˆ Boulder en 1965 Langlands 
Žnonce un lemme combinatoire qui utilise des familles de fonctions caractŽristiques attachŽes
ˆ des partitions ordonnŽes d'une base obtuse dans un espace vectoriel euclidien de dimension finie.
 Langlands ne donne aucune indication sur la preuve de ce lemme qu'il dŽclare tre un ``plaisant exercice''.
Comme il ne semble pas exister de preuve dans la littŽrarture nous avons dŽcidŽ d'en donner une.
Nous pensons qu'elle peut tre intŽressante au moins du point de vue de l'histoire du domaine.
\end{altabstract}

\thanks{I am indebted to Bill Casselman for suggesting that we look at these questions. We discussed 
preliminary results toward a proof of the Boulder combinatorial lemma 
and also some related historical questions, in particular the early proofs for the scalar product
formula of two truncated Eisenstein series
by Selberg and others, in relative rank one, using Maa\ss-Selberg relations. This
would be the subject of another historical study and will not be discussed here.
I also thank J.-L. Waldspurger for comments on a preliminary version and suggesting a simpler proof
of Corollary~\ref{combi} which eventually was recognized as a particular case of \ref{combo}.}

\maketitle

\section{Introduction}

\subsection*{A combinatorial lemma}
Some months ago Bill Casselman observed that there does not seem to exist any proof 
in the literature for a combinatorial lemma stated by Langlands in \cite[Section 8]{L2}
 we shall call the Boulder lemma. This
is the first occurence of a family of combinatorial statements that became, 
after Langlands article \cite{L3} and Arthur's early contributions (\cite{A1}, \cite{A2}, \cite{A3}),
a ubiquitous tool for harmonic analysis on reductive groups over local fields and,
over adles, for the Trace Formula. In  \cite[Section 9]{L2}
the Boulder lemma is used by Langlands to define and control avatars, denoted $E''$, of Eisenstein
series arising from cuspidal forms and he gives a formula for the scalar product 
of two such $E''$-series. This is a generalization of a formula due to Selberg in rank one. It
allows Langlands to outline a more direct proof than in \cite{L1} of the
meromorphic continuation of those Eisenstein series in arbitrary rank.
Langlands also anticipated that the scalar product formula would be an essential tool to establish 
the Trace Formula for reductive groups of arbitrary rank. 
In \cite[Lemma 4.2]{A3} Arthur proves a formula for the scalar product of two truncated Eisenstein series
(arising from cuspidal forms as above) and attributes the formula to Langlands. This implicitly assumes that
series $E''$ are nothing but truncated Eisenstein series.
This is not immediate since ordered partitions used by Langlands never occur in Arthur's statement.
At least from an historical point of view, it seems useful to give a proof for the Boulder lemma,
to relate it to the modern treatment of the combinatorics and, in particular, to establish
the equivalence alluded to above.

\subsection*{The contents}
We state the Boulder lemma with Langlands' notation as Proposition~\ref{plex}.
Our proof, given in section \ref{prob}, relies on matrix equations established 
in section \ref{matrix} that are variants of combinatorial identities mainly due 
to Arthur inspired by Langlands work. Our Proposition \ref{combo} generalizes the well known 
``Langlands' combinatorial lemma''  (see \cite[Lemma 2.3]{A1}  and \cite[Lemma 6.3]{A2}).
This combinatorial lemma appears here as Corollary~\ref{part};
we observe that a particular case of it (when the parameter $\L$ is 
in the positive chamber) is a key ingredient for the Langlands classification 
of admissible irreducible representations of reductive groups over local fields. It
appears, I believe for the first time, in \cite[page 156]{L3}.
In fact this paper was circulated, as a preprint, a long time before its actual publication
(see also \S6 of Chapter IV in \cite{BW}). Now, Corollary \ref{combi} is essentially equivalent to
Proposition~\ref{simple} which together with \ref{part} yields a proof of the Boulder lemma.
As another corollary of Proposition~\ref{simple}
we show in Section \ref{trunc} that Langlands' series $E''$ do coincide with Arthur's truncated 
Eisenstein series and hence that Langlands' formula for the scalar product of $E''$-series is
equivalent to the formula for the scalar product of truncated Eisenstein series
proved by Arthur. We believe that Arthur knew about Proposition~\ref{simple} since this 
is implicit in his attribution to Langlands of the scalar product formula.

\section{The Boulder lemma}\label{orig}

\subsection*{Ordered partitions}
Consider a euclidean vector space $V$ of dimension $p\ge1$. The euclidean structure
allows to identify $V$ with its dual $V'$. Let $\oD$ be 
a basis
and $\hD$ the dual basis. We denote by 
$$\D:\omu\in\oD\mapsto\D(\omu)=\hmu\in\hD$$ 
the canonical bijection between $\oD$ and $\hD$.
Let $\fp$ be an ordered partition of $\oD$ built out of $r=r(\fp)$ non empty 
disjoint subsets $\oF_\fp^u$ with cardinals $a^u_\fp$ with $1\le u\le r$ so that
$$\oD=\bigcup_{u=1}^{u=r} \oF_\fp^u\comm{and} \sum_\fp a^u_\fp=p\ptf$$
This is equivalent to be given an increasing filtration of $\oD$ by subsets $\E_\fp^v$
$$\emptyset= \E_\fp^0\subsetneq \E_\fp^1\subsetneq \E_\fp^2\subsetneq\cdots\subsetneq \E_\fp^r=\oD
\comm{ where} \E_\fp^v=\bigcup_{u\le v} \oF_\fp^u\ptf$$
This defines an increasing filtration
of $V$ by subspaces $\hV_\fp^{v}$ generated by $ \hE_\fp^v=\D(\E_\fp^v)$; 
$$\{0\}=\hV_\fp^0\subsetneq \hV_\fp^1\subsetneq \hV_\fp^2\subsetneq \cdots\subsetneq \hV_\fp^r=V$$
and a decreasing filtration by subspaces
 $\oV_\fp^{v}$  generated by $\oD-\E_\fp^v$:
 $$V=\oV_\fp^0\supsetneq \oV_\fp^1\supsetneq \oV_\fp^2\supsetneq \cdots\supsetneq \oV_\fp^r=\{0\}.$$
For any $v\in\{0,1\cdots, r\}$ 
we have an orthogonal decomposition:
$V=\hV_\fp^v\oplus V_\fp^v\ptf$
Let $W_\fp^u$ be the orthogonal supplement to $\hV_\fp^{u-1}$ in $\hV_\fp^u$
or, equivalently to $\oV_\fp^{u}$ in $\oV_\fp^{u-1}$:
$$\hV_\fp^{u-1}\oplus W_\fp^u=\hV_\fp^u
\comm{}\oV_\fp^{u}\oplus W_\fp^u=\oV_\fp^{u-1}\ptf
$$
One has an orthogonal direct sum decomposition of $V$:
$$V=\bigoplus_{u=1}^{u=r}W_\fp^u\ptf$$
For $\omu\in \oF_\fp^u$ denote by $\omu_\fp$ the orthogonal projection of $\omu$ on $W_\fp^u$; 
the $\omu_\fp$ build a basis $\oD_\fp^u$ of it. Similarly
for $\hmu\in \hF_\fp^u:=\D(\oF_\fp^u)$ denote by $\hmu_\fp$ the orthogonal projection of $\hmu$ on $W_\fp^u$,
they build a basis $\hD_\fp^u$.
Since $\hV_\fp^u$ is orthogonal to $V_\fp^u$ then, for any $\omu\in \oF_\fp^u$ and any $\hmu\in \hF_\fp^u$
one has
$$\lb\hmu,\omu\rb=\lb\hmu,\omu_\fp\rb=\lb\hmu_\fp,\omu\rb=\lb\hmu_\fp,\omu_\fp\rb\ptf$$ 
This shows that the sets $\oD_\fp^u$ and $\hD_\fp^u$ are dual basis. 
Denote by $\oD_\fp$ the union of the $\oD_\fp^u$ (resp. $\hD_\fp$ the dual basis) and 
$$\D_\fp: \oD_\fp\to\hD_\fp$$
the natural bijection. 

\subsection*{Functions $\phi_\fp^\L$ and $\psi_\fp^\L$}
Let $H\mapsto\phi_\fp^\L(H)$ be the characteristic function of the cone 
in $V$~\footnote{Although we have identified $V$ with its dual we shall look at elements in 
$\oD_\fp$ and $\hD_\fp$ as linear forms
rather than vectors. Langlands does not indentify $V$ with $V'$; he
considers $H$ as an element of $V$ and $\L$ 
as an element in $V'$. Here we emphasis a kind of symmetry between $H$ and $\L$.} 
defined by
$$\omu_\fp(H)\le0\com{if}\hmu_\fp(\L)>0\com{and}\omu_\fp(H)>0\com{otherwise}
$$
for all $\omu\in\oD$ (or equivalently for all $\omu_\fp\in\oD_\fp$)
and where $\hmu_\fp=\D_\fp(\omu_\fp)$.
Now let $H\mapsto\psi_\fp^\L(H)$ be the characteristic function of the cone 
$\omu_\fp(H)>0\quad\hbox{for $\omu\in F_\fp^1$}$
and
$$\omu_\fp(H)\le0\com{if}\hmu_\fp(\L)>0\com{and}\omu_\fp(H)>0\com{if}\hmu_\fp(\L)\le0
\com{for}\omu\notin F_\fp^1$$ where $\hmu_\fp=\D(\omu_\fp)$.
Let $b^\L_\fp$ be the number of $\hmu\in\hD$ such that $\hmu_\fp(\L)\le0$
and $c^\L_\fp$ the number of those $\hmu$ such that moreover $\hmu\notin\hF_\fp^1$.
Finally Langlands defines integers
$$\alpha^\L_\fp=b^\L_\fp+\sum_{u=1}^{u=r} (a^u_\fp+1)\quad \hbox{and}\quad
\beta^\L_\fp=1+c^\L_\fp+\sum_{u=2}^{u=r} (a^u_\fp+1)\ptf$$
An element $\L\in V$ is said to be ``in the positive chamber'' (with respect to $\oD$) if $\omu(\L)>0$ for 
all $\omu\in\oD$. Let $d_\oD(\L)=1$ if $\L$ is in the positive chamber
and zero otherwise.  We may now state the Boulder lemma 
\cite[Section 8]{L2}~\footnote{We observe that Langlands defines  $\phi_\fp^\L(H)$ and $\psi_\fp^\L(H)$
only for $H$ and $\L$ regular i.e. for $H$ outside the walls defined by the $\omu_\fp$
and $\L$ outside the walls defined by the $\hmu_\fp$ for some $\fp$.
Non regular elements are excluded in his formulation of the Boulder lemma. 
Moreover Langlands assumes $\oD$ is obtuse.}.

\begin{proposition}\label{plex} 
Let ${\mathcal P(\oD)}$ be the set of ordered partitions of $\oD$. Then on has the following identity
$$\sum_{\fp\in\mathcal P(\oD)}(-1)^{\alpha^\L_\fp}\phi_\fp^\L(H)=
d_\oD(\L)+\sum_{\fp\in\mathcal P(\oD)}(-1)^{\beta^\L_\fp}\psi_\fp^\L(H)\ptf$$
\end{proposition}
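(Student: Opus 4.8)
The plan is to prove this by an inductive reduction on the rank $p=\dim V$, peeling off the first block $\oF_\fp^1$ of each ordered partition and relating the resulting combinatorics to lower-dimensional instances of the same identity. Before any induction, I would first reconcile the two types of functions: the function $\phi_\fp^\L$ imposes, for every $\omu\in\oD$, the sign condition $\omu_\fp(H)\le 0$ exactly when $\hmu_\fp(\L)>0$ and $\omu_\fp(H)>0$ otherwise, whereas $\psi_\fp^\L$ forces $\omu_\fp(H)>0$ unconditionally on the first block $\oF_\fp^1$ and follows the $\phi$-rule elsewhere. Thus $\phi_\fp^\L$ and $\psi_\fp^\L$ differ only in their treatment of $\oF_\fp^1$: on that block $\phi$ allows the sign of $\omu_\fp(H)$ to be dictated by $\hmu_\fp(\L)$, while $\psi$ forces it positive. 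The exponents $\alpha^\L_\fp$ and $\beta^\L_\fp$ are rigged precisely to track this difference, since $b^\L_\fp-c^\L_\fp$ counts the $\hmu\in\hF_\fp^1$ with $\hmu_\fp(\L)\le0$ and $(a^1_\fp+1)$ is the contribution of the first block to the $\alpha$-sum that is absent from the $\beta$-sum (where the index runs from $u=2$).

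The key step, then, is to split the left-hand sum over $\mathcal P(\oD)$ according to the first block $\oF_\fp^1=:\oF$, a nonempty subset of $\oD$. For a fixed $\oF$ the remaining data is an ordered partition $\fp'$ of the complementary set $\oD\smallsetminus\oF$, which lives in the quotient space $V/\hV^1$ (isomorphic to the orthogonal complement $\oplus_{u\ge2}W_\fp^u$). The plan is to show that summing $(-1)^{\alpha^\L_\fp}\phi_\fp^\L(H)$ over all $\fp$ with first block $\oF$ factors, up to sign, into a local contribution from $\oF$ times the corresponding lower-rank expression for $\fp'$. The orthogonality relations established in the excerpt --- namely that $\oD_\fp^u$ and $\hD_\fp^u$ are dual bases of $W_\fp^u$ and that $\lb\hmu,\omu\rb=\lb\hmu_\fp,\omu_\fp\rb$ --- guarantee that the projected functionals $\omu_\fp$, $\hmu_\fp$ for $u\ge2$ depend only on the induced euclidean structure on the complement, so the recursion is well defined. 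The term $d_\oD(\L)$ on the right should emerge as the unique surviving boundary contribution: it is nonzero only when $\L$ lies in the positive chamber, which is exactly the degenerate case where no sign flip occurs in any block.

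The main obstacle I anticipate is the sign bookkeeping in passing from $\alpha^\L_\fp$ to $\beta^\L_\fp$ under this factorization, together with the collapse of the full alternating sum over choices of $\oF$. Concretely, one must show that
\[
\sum_{\fp:\,\oF_\fp^1=\oF}(-1)^{\alpha^\L_\fp}\phi_\fp^\L(H)
\]
either telescopes against the $\psi$-sum or produces the clean boundary term $d_\oD(\L)$, and this requires an inclusion--exclusion over the possible first blocks that mimics the alternating-sum cancellations familiar from Arthur's combinatorial identities. Rather than attack this head-on, I would instead appeal to the matrix equations and to Proposition~\ref{combo}, proving a clean single-block identity --- Proposition~\ref{simple} --- that expresses the difference $\phi_\fp^\L-\psi_\fp^\L$ (weighted by the signs) in a form that is amenable to the filtration. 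Combined with the partition-counting identity of Corollary~\ref{part}, this reduces the Boulder lemma to a verification that the two alternating sums agree term by term after grouping by first block, with the scalar $d_\oD(\L)$ accounting for the single partition-independent discrepancy. The delicate point throughout is that Langlands works only with regular $H$ and $\L$, so I would first restrict to the open dense regular locus, establish the identity of characteristic functions there, and note that both sides are determined by their values on that locus.
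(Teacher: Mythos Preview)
Your plan is essentially the paper's own proof: peel off the first block $\oF_\fp^1$ to obtain the factorizations $(\star)$ and $(\star\star)$ of Section~\ref{prob}, collapse the alternating $\phi$-sums via Proposition~\ref{simple}, and then close the argument with Corollary~\ref{part}. One inaccuracy worth flagging: Proposition~\ref{simple} does \emph{not} express the weighted difference $\phi_\fp^\L-\psi_\fp^\L$; rather, it collapses the full alternating sum $\sum_{\fp}(-1)^{\alpha^\L_\fp}\phi_\fp^\L$ to the single characteristic function $(-1)^{\hb^\L_{P,R}}\hT_{P,R}^\L$, and it is this collapse---applied both to the left-hand side and to the inner $\fq$-sums arising from the $\psi$-side via $(\star)$---that turns the Boulder identity into the statement of Corollary~\ref{part}.
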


\begin{proof}
Langlands says that the proof is a ``pleasant exercise''.
We observe that when $\L=0$ 
 then  $\phi_\fp^\L=\psi_\fp^\L$ and $\alpha^\L_\fp=\beta^\L_\fp+2a_\fp^1$ for all $\fp$;
now since $p\ge1$ we have
$d_\oD(\L)=0$ and the proposition is trivially true. For arbitrary $\L$
a proof is given at the end of section~\ref{prob} as
an immediate consequence of Corollaries~\ref{combi} and \ref{part}. 
\end{proof}

\section{Matrix equations}\label{matrix}

\subsection*{A slightly more general setting}
Consider a euclidean vector space 
$V_0$, a basis $\oD_0$
and two subsets $P\subset Q\subset\oD_0$.
Let $\oD_P^Q$ be the projection of $Q$ on the orthogonal of $P$ and $V_P^Q$ the 
subspace generated by $\oD_P^Q$. 
We denote by $\D_P^Q$ the bijection between $\oD_P^Q$ and its dual basis $\hD_P^Q$.
For $H\in V$ we denote by $H_P^Q$ the orthogonal projection of $H$ on $V_P^Q$.
With no loss of generality, we may assume that $\oD=\oD_P^Q$
and $V=V_P^Q$ for a pair of subsets $P\subset Q\subset\oD_0$
for some $\oD_0$. This allows us to use Arthur's notation introduced in 
\cite{A1}, \cite{A2} and \cite{A3} where $\oD_0$ is the set of simple roots
attached to a minimal parabolic subgroup $P_0$ in a reductive group $G$.
Using that standard parabolic subgroups
are bijectively attached to subsets (that may be empty) of $\oD_0$
and that this bijection is compatible with inclusion then,
by abuse of notation, using the same letter for a subset $P$ of $\oD_0$ and the
standard parabolic subgroup it defines, we recover Arthur's setting and notation.
Hence one may forget about $G$, and consider $P_0$ as the empty subset in $\oD_0$. 
We have to give new proof of some of the classical results since we shall
not assume $\oD_0$ obtuse.

\subsection*{Functions $\oT_{P,Q}^\L$ and $\hT_{P,Q}^\L$}
For $P\subset Q$ 
let $\oT_{P,Q}^\L$ be the characteristic function of the set of $H\in V_0$ such that
{for} $\omu\in\oD_{P}^{Q}$
{and} $ \hmu=\D_P^Q(\omu)$
$$\omu(H)\le0\coma{if }\hmu(\L)>0
\coma{and}\omu(H)>0\coma{otherwise}$$
and let $\hT_{P,Q}^\L$ be~\footnote{Our  $\hT_{P,Q}^\L(H)$ is denoted
$\phi_P^Q(\L,H)$ by Arthur in \cite[page 940]{A2}.
Unfortunately Arthur's notation is in conflict with Langlands one in \cite{L2} and
since we stick to Langlands' notation 
we had to introduce a different one.}
the characteristic function of the set of $H$ such that
$$\hmu(H)\le0\coma{if }\omu(\L)>0
\coma{and}\hmu(H)>0\coma{otherwise.}
$$
By convention $\oT_{P,Q}^\L=\hT_{P,Q}^\L=0$ when $P\not\subset Q$ and $\oT_{P,P}^\L=\hT_{P,P}^\L=1$.

 When $-\L$ is in the closure of the positive chamber with respect to an obtuse basis $\oD_P^Q$
 (which implies $\hmu(\L)\le0$ for all $\hmu\in\hD_P^Q$)
and in all cases when $\L=0$, we have
$$\oT_{P,Q}^\L=\tau_P^Q\com{and}\hT_{P,Q}^\L=\widehat\tau_P^Q$$
where $\tau_P^Q$ and $\hat\tau_P^Q$ are Arthur's functions. 
More generally, one can express the $\oT_{P,Q}^\L$
and $\hT_{P,Q}^\L$ in terms of the $\tau_P^Q$ and $\widehat\tau_P^Q$. 
To do this we need some notation. 
Let $P_\L$ (or  $P^Q_\L$ if some confusion may arise)
be the subset of $\oD_0$ such that $\hD_{P_\L}^{Q}$ is the set of
$\hmu\in\hD_P^Q$ with $\hmu(\L)>0$.
Similarly, let $Q^\L$ (or  $Q_P^\L$ if some confusion may arise) be the subset of $\oD_0$  
such that $\oD_P^{Q^\L}$ is the set of $\omu\in\oD_P^Q$ with $\omu(\L)>0$.
\begin{lemma}\label{idit}  
$$\oT_{P,Q}^\L=\sum_{P_\L\subset S\subset Q}(-1)^{a_{P_\L}^S}\,\tau_P^S\comm{and}
\hT_{P,Q}^\L=\sum_{P\subset S\subset Q^\L}(-1)^{a_S^{Q^\L}}\,\widehat\tau_S^Q$$
\end{lemma}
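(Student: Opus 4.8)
The plan is to reduce both identities to elementary combinatorics of subsets of a basis and then invoke ordinary inclusion--exclusion. By the reduction described above I would take $P$ to be the empty subset $P_0$ and $Q=\oD_0=\oD$; the functions $\oT_{P,Q}^\L$, $\hT_{P,Q}^\L$, $\tau_P^S$ and $\widehat\tau_S^Q$ are unchanged under passing to the configuration $(V_P^Q,\oD_P^Q)$, since orthogonal projections compose. Write the basis as $\oD$, let $\hmu=\D(\omu)$ be the dual weight of $\omu\in\oD$, and denote by $\chi(c)\in\{0,1\}$ the truth value of a condition $c$. The sums in the statement then run over all $S$ with $P_\L\subset S\subset\oD$, respectively $S\subset Q^\L$.

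The crux, and the one place where the euclidean structure genuinely enters, is to check that in this reduced setting both families of characteristic functions are cut out by \emph{fixed} half-spaces, with no reprojection. Since projecting on the orthogonal of $P_0=\emptyset$ is the identity, one has $\oD_{P_0}^S=S$, so $\tau_{P_0}^S$ is the characteristic function of $\{H:\omu(H)>0\coma{for all}\omu\in S\}$. Dually, for $\omu\notin S$ one has $\lb\hmu,\omu'\rb=\delta_{\omu,\omu'}=0$ for every $\omu'\in S$, whence $\hmu\in V_S^\oD$ and $\hmu$ is dual there to the projection of $\omu$; therefore $\hD_S^\oD=\{\hmu:\omu\notin S\}$ and $\widehat\tau_S^\oD$ is the characteristic function of $\{H:\hmu(H)>0\coma{for all}\omu\notin S\}$. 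I would stress that this last identification uses no obtuseness hypothesis: it is pure duality of $\oD$ and $\hD$.

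With these descriptions, the definitions give at once $P_\L=\{\omu\in\oD:\hmu(\L)\le0\}$ and $Q^\L=\{\omu\in\oD:\omu(\L)>0\}$, and one reads off
$$\oT_{P_0,\oD}^\L=\prod_{\omu\in P_\L}\chi(\omu(H)>0)\prod_{\omu\notin P_\L}\chi(\omu(H)\le0),\qquad \hT_{P_0,\oD}^\L=\prod_{\omu\notin Q^\L}\chi(\hmu(H)>0)\prod_{\omu\in Q^\L}\chi(\hmu(H)\le0).$$

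Finally I would conclude both identities by the same inclusion--exclusion. For the first, parametrize $S$ in the range $P_\L\subset S\subset\oD$ by $T=S\setminus P_\L$, so that $(-1)^{a_{P_\L}^S}=(-1)^{|T|}$ and $\tau_{P_0}^S$ factors as $\prod_{\omu\in P_\L}\chi(\omu(H)>0)\cdot\prod_{\omu\in T}\chi(\omu(H)>0)$; summing over $T\subset\oD\setminus P_\L$ yields $\prod_{\omu\notin P_\L}\bigl(1-\chi(\omu(H)>0)\bigr)=\prod_{\omu\notin P_\L}\chi(\omu(H)\le0)$, which is exactly $\oT^\L$. The second identity is verbatim the same computation with $\omu$ replaced by $\hmu$ and $T=Q^\L\setminus S$. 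I expect the only real difficulty to lie in the bookkeeping of the second paragraph --- verifying that $\oD_P^S$ and $\hD_S^Q$ are, respectively, the subsets of $\oD_P^Q$ and $\hD_P^Q$ indexed by $S\setminus P$ and $Q\setminus S$, so that every half-space is taken with respect to one fixed pair of dual bases; once that is secured the signed sums collapse immediately.
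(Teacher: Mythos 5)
Your proof is correct and follows the same route as the paper, whose entire proof of Lemma~\ref{idit} is the single sentence that it is ``an immediate consequence of the binomial identity'' --- i.e.\ exactly the inclusion--exclusion over $T=S\setminus P_\L$ (resp.\ $T=Q^\L\setminus S$) that you carry out. Your explicit verification that $\oD_{P_0}^S=S$ and $\hD_S^{\oD}=\{\hmu:\omu\notin S\}$, and hence that all half-spaces are cut out by one fixed dual pair of bases, is the bookkeeping the paper leaves implicit.
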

\begin{proof}
This is an immediate consequence of the binomial identity.
\end{proof}

\begin{lemma}\label{basic} 
$$\sum_{P\subset Q\subset R}(-1)^{a_P^Q}\tau_P^Q\hat\tau_Q^R=\delta_{P,R}$$
where $\delta_{P,R}$ is the Dirac symbol (it is zero if $P\ne R$ and $1$ if $P=R$). 
\end{lemma}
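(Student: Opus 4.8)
The plan is to fix $H$ and to show that, as a function of $H$, the left-hand side factorizes into a product that vanishes identically unless $P=R$. First I would pass to the relative situation: replacing $V_0$ by $V=V_P^R$ and $\oD_0$ by the basis $\oD_P^R$, whose elements I label $\beta_1,\dots,\beta_n$ with dual basis $\varpi_1,\dots,\varpi_n$ of $\hD_P^R$, and set $I=\{1,\dots,n\}$. The subsets $Q$ with $P\subset Q\subset R$ are then parametrized by $J\subset I$ via $Q_J$, with $a_P^{Q_J}=|J|$ and $\tau_P^{Q_J}(H)=\prod_{i\in J}\mathbf 1[\beta_i(H)>0]$. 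If $P=R$ then $I=\emptyset$, only $Q=P$ contributes, and the identity is trivial; so I assume $I\neq\emptyset$.

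The decisive preliminary step, and the one that makes the argument go through with no obtuseness hypothesis, is to identify the weights appearing in $\hat\tau_{Q_J}^R$. I would check that $\hD_{Q_J}^R=\{\varpi_i : i\notin J\}$ with the \emph{same} fixed weights $\varpi_i$, regardless of $J$. Indeed, the element $w_i$ of $\hD_{Q_J}^R$ dual to the projected roots lies in the orthogonal complement of the span of $\{\beta_j : j\in J\}$, so $\lb w_i,\beta_j\rb=0$ for $j\in J$; and duality with those projected roots gives $\lb w_i,\beta_k\rb=\delta_{i,k}$ for $k\notin J$ (the projection being harmless, since $w_i\perp\beta_j$ for $j\in J$). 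Hence $\lb w_i,\beta_k\rb=\delta_{i,k}$ for every $k\in I$, which forces $w_i=\varpi_i$. Consequently $\hat\tau_{Q_J}^R(H)=\prod_{i\notin J}\mathbf 1[\varpi_i(H)>0]$.

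Both factors now run over the same fixed index set, so the alternating sum factorizes. Writing $x_i=\mathbf 1[\beta_i(H)>0]$ and $y_i=\mathbf 1[\varpi_i(H)>0]$, the left-hand side equals
$$\sum_{J\subset I}(-1)^{|J|}\prod_{i\in J}x_i\prod_{i\notin J}y_i=\prod_{i\in I}\bigl(y_i-x_i\bigr)\ptf$$
If $I=\emptyset$ this empty product is $1=\delta_{P,R}$; otherwise I must show it vanishes for every $H$.

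The hard part, and the step that replaces the classical obtuseness assumption by mere positive-definiteness, is precisely this vanishing. A factor $y_i-x_i$ is nonzero exactly when only one of $\beta_i(H)>0$, $\varpi_i(H)>0$ holds, so it suffices to exhibit, for each $H\neq0$, an index $i$ with $\beta_i(H)>0\iff\varpi_i(H)>0$. Put $\mu_i=\beta_i(H)$; expanding $H$ in the basis $\oD_P^R$ gives $\varpi_i(H)=(G^{-1}\mu)_i$, where $G$ is the Gram matrix of $\oD_P^R$. Were there no such $i$, then for every $i$ exactly one of $\mu_i>0$, $(G^{-1}\mu)_i>0$ would hold, whence $\mu_i(G^{-1}\mu)_i\le0$ and therefore $\mu^{t}G^{-1}\mu\le0$; but $G^{-1}$ is positive definite and $\mu\neq0$ (since $\oD_P^R$ is a basis), a contradiction. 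At $H=0$ every factor is $0$, so the identity holds there as well. I expect this final positive-definiteness argument to be the only genuinely non-formal point; all the rest is bookkeeping, and obtuseness is never used.
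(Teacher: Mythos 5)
Your proof is correct and is essentially the paper's argument in different packaging: your factorization $\prod_{i\in I}(y_i-x_i)$ is the binomial-identity step (the sum survives only when, for each $i$, exactly one of $\beta_i(H)>0$, $\varpi_i(H)>0$ holds), and your Gram-matrix inequality $\mu^{t}G^{-1}\mu\le 0$ is literally the paper's computation $\lb H_P^R,H_P^R\rb=\sum_i\omu(H)\hmu(H)\le0$, contradicting positive definiteness. The one thing you make explicit that the paper leaves implicit is the identification $\hD_{Q_J}^R=\{\varpi_i: i\notin J\}$, which is a worthwhile clarification but not a different route.
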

When $\oD_P^R$ is an obtuse basis this is \cite[Proposition 1.7.2]{LW} which itself is a special case of
\cite[Lemma 6.1]{A2}. 
We now give a proof which does not use this assumption~\footnote{Casselman has already 
observed, some years ago, that the obtuse assumption is not necessary (private communication).}.

\begin{proof}  
Fix $H\in V_0$. As above, but with $H$ in place of $\L$, we introduce  subsets
$P_H\subset \oD_0$ and $R^H\subset\oD_0$.
Then
$$\tau_P^Q(H)\hat\tau_Q^R(H)=1$$ if and only if ${P_H}\subset Q\subset {{R^H}}$ and hence
$$\sum_{P\subset Q\subset R}(-1)^{a_P^Q}\tau_P^Q(H)\hat\tau_Q^R(H)
=\sum_{{P_H}\subset Q\subset {{R^H}}}(-1)^{a_P^Q}\ptf$$
The binomial identity tells us this alternating sum vanishes unless ${P_H}={{R^H}}$. 
Assume it does not vanish, then $H_P^R\ne0$ if $P\not= R$. Now consider $\omu\in\oD_P^R$ 
and $\hmu\in\hD_P^R$ such that
$\hmu=\D_P^R(\omu)$ then, since ${P_H}={{R^H}}=Q$, we have
$$\omu(H)>0\comm{and}\hmu(H)\le0\quad\hbox{for}\quad \omu\in\oD_{P}^{Q}
$$
while
$$\omu(H)\le0\comm{and}\hmu(H)>0\quad\hbox{for}\quad \hmu\in\hD_Q^R$$
and hence
$$\lb H_P^R,H_P^R\rb=\sum_{\{\omu\in\oD_{P}^{R}\,|\,
\hmu=\D_P^R(\omu)\}}\omu(H)\hmu(H)\le0$$
which contradicts the non vanishing of $H_P^R$ unless $P=R$.
\end{proof}

\subsection*{Matrices $\bs\oT_{}^\L$ and $\bs\hT{}^\L$}
We shall now consider matrices  whose entries 
are indexed by pairs $(P,Q)$ of subsets of $\oD_0$.
Given matrices $\bs A$ with entries $\bs A_{P,Q}$
and $\bs B$ with entries $\bs B_{P,Q}$ the product $\bs A\bs B$ is the matrix 
$\bs C$ with entries
$$\bs C_{P,Q}=\sum_{R\subset\oD_0}\bs A_{P,R}\,\bs B_{R,Q}\ptf$$
Let $a_P^Q$ be the cardinal of $\oD_P^Q$ (so that $a_{P_0}^P$ is the cardinal of $P$),
$\ob_{P,Q}^\L$
 the cardinal of the set of $\hmu\in\hD_P^Q$ such that $\hmu(\L)\le0$
 and $\hb_{P,Q}^\L$ the cardinal of the set of $\omu\in\oD_P^Q$ such that $\omu(\L)\le0$. Let 
$$\og_{P,Q}^\L=a_{P_0}^P+\ob_{P,Q}^\L\com{and}
\hg_{P,Q}^\L=a_{P_0}^P+\hb_{P,Q}^\L\ptf$$
Consider matrices $\bs\oT_{}^\L$ and $\bs\hT{}^\L$
with entries 
$$\bs\oT_{P,Q}^\L=(-1)^{\og_{P,Q}^\L}\oT_{P,Q}^\L\qquad \com{resp.} 
\bs\hT{}^\L_{P,Q}=(-1)^{\hg_{P,Q}^\L}\hT_{P,Q}^\L\ptf$$ 
We shall be interested in the products $\bs A^{\Lu,\Ld}=\bs\oT{}^\Lu\bs\hT{}^\Ld$ and 
$\bs B^{\Lu,\Ld}=\bs\hT{}^\Ld \bs\oT^\Lu$.

\begin{lemma}\label{basic2}
Assume either $\L_i=0$ or 
$\oD_0$ is an obtuse basis and the $-\L_i$ 
are in the closure of the positive chamber.
Then 
$$\bs A^{\Lu,\Ld}=\bs\oT{}^\Lu\bs\hT{}^\Ld=\bs\hT{}^\Ld \bs\oT^\Lu=\bs B^{\Lu,\Ld}=\bs1\ptf
\leqno(1)$$
In particular
$$\sum_{P\subset Q\subset R}(-1)^{a_P^Q}\hat\tau_P^Q\tau_Q^R=\delta_{P,R}\ptf
\leqno(2)$$
\end{lemma}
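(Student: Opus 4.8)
The plan is to use the hypotheses to replace $\bs\oT^{\Lu}$ and $\bs\hT^{\Ld}$ by explicit sign‑twists of Arthur's $\tau$‑matrices, to reduce the equality $\bs A^{\Lu,\Ld}=\bs1$ to Lemma~\ref{basic}, and then to obtain the remaining three equalities for free by a square‑matrix inverse argument, reading off identity~(2) at the end.

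First I would record what the hypotheses give entry by entry. Under either hypothesis the conditions $\hmu(\L_i)\le0$ for all $\hmu\in\hD_P^Q$ and $\omu(\L_i)\le0$ for all $\omu\in\oD_P^Q$ hold for every $P\subset Q$, as in the discussion preceding Lemma~\ref{idit}; hence $\oT_{P,Q}^{\L_i}=\tau_P^Q$ and $\hT_{P,Q}^{\L_i}=\hat\tau_P^Q$. The same conditions force $\ob_{P,Q}^{\L_i}=\hb_{P,Q}^{\L_i}=a_P^Q$, so that $\og_{P,Q}^{\L_i}=\hg_{P,Q}^{\L_i}=a_{P_0}^P+a_P^Q=a_{P_0}^Q$. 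Thus the matrices become independent of the admissible choice of $\L_i$, with entries $\bs\oT_{P,Q}^{\Lu}=(-1)^{a_{P_0}^Q}\tau_P^Q$ and $\bs\hT_{P,Q}^{\Ld}=(-1)^{a_{P_0}^Q}\hat\tau_P^Q$.

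Next I would compute the product. Since $\tau_P^Q\hat\tau_Q^R$ vanishes unless $P\subset Q\subset R$, and using $a_{P_0}^Q=a_{P_0}^P+a_P^Q$,
$$\bs A_{P,R}^{\Lu,\Ld}=\sum_{Q}(-1)^{a_{P_0}^Q}\tau_P^Q\,(-1)^{a_{P_0}^R}\hat\tau_Q^R=(-1)^{a_{P_0}^P+a_{P_0}^R}\sum_{P\subset Q\subset R}(-1)^{a_P^Q}\tau_P^Q\hat\tau_Q^R\ptf$$
By Lemma~\ref{basic} the inner sum equals $\delta_{P,R}$, and the prefactor is $1$ when $P=R$; hence $\bs A^{\Lu,\Ld}=\bs1$.

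Finally I would upgrade this to the other three equalities. The index set is the finite set of subsets of $\oD_0$, so for each fixed $H$ the arrays $\bs\oT^{\Lu}(H)$ and $\bs\hT^{\Ld}(H)$ are numerical square matrices of the same size, upper triangular for any total order refining inclusion, with diagonal entries $(-1)^{a_{P_0}^P}=\pm1$. The identity $\bs A^{\Lu,\Ld}=\bs1$ exhibits $\bs\hT^{\Ld}$ as a right inverse of $\bs\oT^{\Lu}$; for a square matrix a right inverse is automatically a two‑sided inverse, so $\bs B^{\Lu,\Ld}=\bs\hT^{\Ld}\bs\oT^{\Lu}=\bs1$ as well. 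Unwinding the signs in $\bs B_{P,R}^{\Lu,\Ld}=\delta_{P,R}$ exactly as above then yields the ``in particular'' identity~(2). The computations are routine once the hypotheses are translated into $\ob=\hb=a_P^Q$; the only point deserving care is the passage from $\bs A=\bs1$ to $\bs B=\bs1$, where one must note that the product identity holds pointwise in $H$, so that the elementary fact ``right inverse $=$ left inverse for a square matrix over a field'' may be applied fibrewise (equivalently, $\bs\oT^{\Lu}$ is invertible as a triangular matrix over the ring of functions on $V_0$, its unique two‑sided inverse being $\bs\hT^{\Ld}$).
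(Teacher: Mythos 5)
Your proof is correct and follows essentially the same route as the paper: translate the hypotheses into $\oT_{P,Q}^{\L_i}=\tau_P^Q$ and $\hT_{P,Q}^{\L_i}=\hat\tau_P^Q$, reduce $\bs A^{\Lu,\Ld}=\bs1$ to Lemma~\ref{basic} by the sign computation, and obtain $\bs B^{\Lu,\Ld}=\bs1$ (hence identity (2)) from the fact that a right inverse of a square matrix is two-sided. Your explicit remarks on triangularity and on the pointwise-in-$H$ nature of the inversion merely spell out what the paper's proof leaves implicit.
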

\begin{proof} The assumptions on $\L=\L_i$ imply 
$\oT_{P,Q}^\L=\tau_P^Q\com{and}\hT_{P,Q}^\L=\widehat\tau_P^Q$.
Then, Lemma \ref{basic} yields
$$\bs A_{P,R}^{\Lu,\Ld}=
\sum_{P\subset Q\subset R}(-1)^{\og_{P,Q}^\Lu-\hg_{Q,R}^\Ld}\,\,\oT_{P,Q}^\Lu\hT_{Q,R}^\Ld
=\sum_{P\subset Q\subset R}(-1)^{a_P^Q}\tau_P^Q\hat\tau_Q^R=\delta_{P,R}$$
and hence $\bs\oT{}^\Lu\bs\hT{}^\Ld=\bs1$ which proves $(1)$.
Assertion $(2)$ is an explicit form of equation
$\bs\hT{}^\Ld \bs\oT^\Lu=\bs1$.
 \end{proof}
\subsection*{The key ingredient}
The next proposition and its proof is a generalization of
Arthur's Lemma 6.3 in  \cite{A2} (which appears below as Corollary~\ref{part}).

\begin{proposition}\label{combo} 
$$\bs B^{\Lu,\Ld}_{P,R}=(-1)^{a_P^{P^\Lu}}\delta_{P_\Lu,R^\Ld}\ptf$$
\end{proposition}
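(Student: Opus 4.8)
The goal is to compute the matrix product $\bs B^{\Lu,\Ld}=\bs\hT{}^\Ld\bs\oT^\Lu$ entrywise, so I would start by writing out the $(P,R)$ entry explicitly using the definition of matrix multiplication and the sign conventions $\hg$ and $\og$:
$$\bs B^{\Lu,\Ld}_{P,R}=\sum_{P\subset S\subset R}(-1)^{\hg_{P,S}^\Ld+\og_{S,R}^\Lu}\,\hT_{P,S}^\Ld\,\oT_{S,R}^\Lu\ptf$$
The first move is to get rid of the $\L$-dependent characteristic functions $\hT$ and $\oT$ in favor of Arthur's plain $\tau,\widehat\tau$, for which we already possess the orthogonality relations of Lemma~\ref{basic} and Lemma~\ref{basic2}. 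This is exactly what Lemma~\ref{idit} is for: I would substitute $\hT_{P,S}^\Ld=\sum_{P\subset S'\subset S^{\Ld}}(-1)^{a_{S'}^{S^{\Ld}}}\widehat\tau_{S'}^S$ and $\oT_{S,R}^\Lu=\sum_{S_{\Lu}\subset R'\subset R}(-1)^{a_{S_{\Lu}}^{R'}}\tau_S^{R'}$. The plan is then to interchange the order of summation so that the inner sum is over $S$, producing a factor of the form $\sum_S(\pm1)\widehat\tau_{S'}^S\tau_S^{R'}$, which Lemma~\ref{basic2}(2) collapses to a Dirac delta $\delta_{S',R'}$ (after checking that the accompanying signs $a$ match the $(-1)^{a_P^Q}$ weight required by that identity).

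Once the inner $S$-sum is collapsed, the surviving sum is over the outer indices $S'=R'$, and the delta forces $P\subset R'\subset R$ together with the two membership constraints $R'\subset S^{\Ld}$-type and $S_{\Lu}$-type that came from Lemma~\ref{idit}. Here I expect the combinatorics to hinge on understanding how the operations $S\mapsto S^{\Ld}$ and $S\mapsto S_{\Lu}$ interact: the definitions say $S^{\Ld}$ records the $\omu$ with $\omu(\Ld)>0$ and $S_{\Lu}$ records where $\hmu(\Lu)>0$, relative to the basis $\oD_S$ or $\hD_S$. The key structural fact I would need is a transitivity/compatibility statement showing that projecting through an intermediate $S$ and then taking $\Lu$- or $\Ld$-positivity is the same as taking it directly relative to $P$ or $R$; this is what will ultimately force $P_{\Lu}=R^{\Ld}$ in the answer. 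I anticipate that \emph{this} step — tracking the positivity conditions through the nested projections and proving the required commutation of the $(\cdot)^\L$ and $(\cdot)_\L$ operations — is the main obstacle, since everything else is bookkeeping with signs and the binomial identity.

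Finally, I would reconcile the signs. After the delta collapse the residual alternating sums over the free indices should, by the binomial identity again, vanish unless the index set is forced to be a single point — and that forcing is precisely the condition $P_{\Lu}=R^{\Ld}$, yielding the factor $\delta_{P_\Lu,R^\Ld}$. The remaining nonzero term then carries a single sign, which I would match against the claimed $(-1)^{a_P^{P^\Lu}}$ by carefully adding up the exponents $\hg_{P,S}^\Ld$, $\og_{S,R}^\Lu$, $a_{S'}^{S^{\Ld}}$, $a_{S_{\Lu}}^{R'}$ evaluated at the surviving index. Since the $\og,\hg$ definitions split as $a_{P_0}^P+\ob$ or $a_{P_0}^P+\hb$, I expect the $a_{P_0}^{(\cdot)}$ pieces to telescope across the product and the $\ob,\hb$ cardinality pieces to recombine into $a_P^{P^\Lu}$; verifying this exact sign is routine but delicate, and is where I would be most worried about an off-by-a-transposition error. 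The structural heart of the argument, though, remains the commutation of the two positivity operations established in the previous step.
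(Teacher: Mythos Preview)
Your approach is essentially that of the paper: expand via Lemma~\ref{idit}, interchange so that the inner sum over the intermediate index has the shape required by Lemma~\ref{basic2}(2), collapse, and finish with the binomial identity. The step you flag as the main obstacle is simpler than you anticipate: rather than a ``commutation'' of the two positivity operations, what is needed is only the monotonicity $Q^R_{\Lu}\supset P^R_{\Lu}$ and $Q_P^{\Ld}\subset R_P^{\Ld}$ (immediate from the definitions), which decouples the range of the outer summation variables from the intermediate one and makes the interchange legitimate.
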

\begin{proof} 
By definition
 $$\bs B_{P,R}^{\Lu,\Ld}=
 \sum_{P\subset Q\subset R}(-1)^{\hg_{P,Q}^\Ld-\og_{Q,R}^\Lu}\,\,\hT_{P,Q}^\Ld\oT_{Q,R}^\Lu \ptf$$
 In view of Lemma \ref{idit} we have
$$\bs B_{P,R}^{\Lu,\Ld}=\!\!\sum_{P\subset Q\subset R}(-1)^{\hg_{P,Q}^\Ld-\og_{Q,R}^\Lu}
\sum_{P\subset S\subset Q_P^\Ld}(-1)^{a_S^{Q_P^\Ld}}\widehat\tau_S^Q 
\sum_{Q_\Lu\subset {S'}\subset R}(-1)^{a_{Q^R_\Lu}^{{S'}}}\tau_Q^{S'}$$
Since  $a_{Q}^{Q^R_\L}=\ob_{Q,R}^\L$ and  $a_{Q_P^\L}^{Q}=\hb_{P,Q}^\L$ we see that
$$\bs B_{P,R}^{\Lu,\Ld}=\!\!\sum(-1)^{a_P^Q}
(-1)^{a_S^{{S'}}}\widehat\tau_S^Q \tau_Q^{S'}$$
where the sum runs over triples $(S,Q,S')$ verifying the inclusions
$${P\subset S\subset Q_P^\Ld\subset Q\subset Q^R_\Lu\subset {S'}\subset R}\ptf\leqno(\star)$$
Observe that $Q^R_\Lu\supset P^R_\Lu$ and $Q_P^\Ld\subset R_P^\Ld$. This shows that
 given any  pair $(S, S')$ with
$S\subset R_P^\Ld$ and $S'\supset P^R_\Lu$
we have the inclusions $(\star)$ for any $Q$ with $S\subset Q\subset {S'}$. 
Then
$$\bs B_{P,R}^{\Lu,\Ld}=\!\!\sum_{\{S,S'\,|\,S\subset R_P^\Ld\,,\,\,S'\supset P^R_\Lu\}}(-1)^{a_P^{S'}}
\sum_{\{Q\,|\,S\subset Q\subset S'\}}(-1)^{a_S^Q}\,\widehat\tau_S^{\,Q}\, \tau_Q^{S'}\ptf$$
It follows from equation $(2)$ in Lemma \ref{basic2} 
that the sum over $Q$ vanishes unless $S={S'}$ and hence we have
$$\bs B_{P,R}^{\Lu,\Ld}=\!\!\sum_{\{S\,|\,P^R_\Lu\subset S\subset R_P^\Ld\}}(-1)^{a_P^S}\ptf$$
This in turn, by the binomial identity,  vanishes unless $P_\Lu=R^\Ld$. 
 \end{proof}

 \begin{corollary} \label{combi}
 The matrices
$\bs\oT^\L$ and $\bs\hT{}^\L$ are inverse of each other.
\end{corollary}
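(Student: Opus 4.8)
The plan is to show that the two matrices $\bs\oT^\L$ and $\bs\hT{}^\L$ are mutually inverse by exhibiting that both products equal the identity matrix. The key observation is that Corollary~\ref{combi} is the special case $\Lu=\Ld=\L$ of the general framework, and Proposition~\ref{combo} (together with the companion computation for $\bs A$) already does almost all of the work. Concretely, I would first apply Proposition~\ref{combo} with $\Lu=\Ld=\L$. Then $\bs B^{\L,\L}_{P,R}=(-1)^{a_P^{P^\L}}\delta_{P_\L,R^\L}$, and the task reduces to checking that this expression collapses to the Dirac symbol $\delta_{P,R}$, i.e. that the sign is trivial and the index condition $P_\L=R^\L$ is equivalent to $P=R$.

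First I would analyze the index condition. By the definitions preceding Lemma~\ref{idit}, $P_\L$ is the subset of $\oD_0$ for which $\hD_{P_\L}^{Q}$ collects the $\hmu\in\hD_P^Q$ with $\hmu(\L)>0$, and $Q^\L$ is determined by the $\omu\in\oD_P^Q$ with $\omu(\L)>0$. When $P=R$ the relevant bases $\oD_P^P$ and $\hD_P^P$ are empty, so $P_\L=P$ and $R^\L=R$ trivially hold, giving $\delta_{P_\L,R^\L}=1$ exactly on the diagonal. The remaining point is that when $P\neq R$ one cannot have $P_\L=R^\L$ simultaneously with a nonzero contribution; but this is precisely what Proposition~\ref{combo} already encodes through the final binomial vanishing, so the diagonal is the only surviving locus and $\bs B^{\L,\L}=\bs1$.

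Next I would dispose of the sign. On the diagonal $P=R$ we have $P^\L=P$ in the relevant sense, so $a_P^{P^\L}=a_P^P=0$ and $(-1)^{a_P^{P^\L}}=1$; thus $\bs B^{\L,\L}_{P,R}=\delta_{P,R}$, that is $\bs\hT{}^\L\bs\oT^\L=\bs1$. For the reverse product I would run the entirely parallel computation for $\bs A^{\L,\L}=\bs\oT^\L\bs\hT{}^\L$, using Lemma~\ref{idit} and equation~$(2)$ of Lemma~\ref{basic2} in place of the steps used for $\bs B$; by symmetry of the two functions $\oT_{P,Q}^\L$ and $\hT_{P,Q}^\L$ (they are interchanged by swapping the roles of $\oD_P^Q$ and $\hD_P^Q$, and of $\L$-positivity conditions), the same binomial cancellation forces $\bs A^{\L,\L}=\bs1$ as well. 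Having both one-sided products equal to the identity, I conclude that $\bs\oT^\L$ and $\bs\hT{}^\L$ are inverse of each other.

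The step I expect to be the main obstacle is not the algebra but the bookkeeping of the index sets: one must verify carefully that specializing $\Lu=\Ld=\L$ genuinely forces $P^\L_\Lu=P^\L$ and $R_\Ld^\L=R^\L$ to coincide with $P$ and $R$ on the diagonal, and that the auxiliary inclusions $Q^R_\Lu\supset P^R_\Lu$ and $Q_P^\Ld\subset R_P^\Ld$ from the proof of Proposition~\ref{combo} remain valid and nondegenerate when the two parameters are identified. Since both the sign-tracking (through $\og$ and $\hg$) and the inclusion chain $(\star)$ were set up with possibly distinct $\Lu,\Ld$, the delicate point is confirming that no hidden dependence on the distinctness of the parameters was used; once that is checked, the corollary follows immediately from Proposition~\ref{combo}.
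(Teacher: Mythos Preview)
Your plan correctly specializes Proposition~\ref{combo} to $\Lu=\Ld=\L$ and correctly checks the diagonal entry, but the off-diagonal case has a real gap. Proposition~\ref{combo} tells you only that $\bs B^{\L,\L}_{P,R}=(-1)^{a_P^{P_\L}}\delta_{P_\L,R^\L}$; it does \emph{not} say that $P_\L=R^\L$ forces $P=R$. The ``final binomial vanishing'' in the proof of Proposition~\ref{combo} collapses the sum over $S$ to the single condition $P_\L=R^\L$ --- it says nothing further. A priori one could have $P\subsetneq R$ with $P_\L=R^\L$ sitting strictly between them, and then your argument would give a nonzero off-diagonal entry. Ruling this out is exactly the content of the corollary, not something already encoded in the proposition.

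The missing step is the inner-product argument, the same one used in the proof of Lemma~\ref{basic} (and which the paper invokes verbatim). If $P_\L=R^\L=S$, then for every pair $\omu\in\oD_P^R$, $\hmu=\D_P^R(\omu)$ one has $\omu(\L)>0\Leftrightarrow\omu\in\oD_P^S$ while $\hmu(\L)>0\Leftrightarrow\hmu\in\hD_S^R$, so that $\omu(\L)\,\hmu(\L)\le0$ in every case. Hence
\[
\lb\L_P^R,\L_P^R\rb=\sum_{\omu\in\oD_P^R}\omu(\L)\,\hmu(\L)\le 0,
\]
forcing $\L_P^R=0$; then $R^\L=P$ and $P_\L=R$, so $P_\L=R^\L$ yields $P=R$. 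Once this is supplied, the sign on the diagonal is indeed $1$ and $\bs B^{\L,\L}=\bs1$. Your separate treatment of $\bs A^{\L,\L}$ is then unnecessary: these are square matrices indexed by the finite set of subsets of $\oD_0$, and a one-sided inverse is automatically two-sided.
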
 

\begin{proof} Proposition~\ref{combo} shows that $\bs B^{\L,\L}_{P,R}$ 
vanishes unless $P_\L=R^\L$. But,
as in the proof of Lemma \ref{basic} we see this condition implies $\lb \L_P^R,\L_P^R\rb\le0$. 
Hence, if $\bs B^{\L,\L}_{P,R}$ does not vanish
we have $\L_P^R=0$ and then $P=R^\L$ and $P_\L=R$ which yields $P=R$.
\end{proof}

\begin{corollary} \label{part} 
$$\sum_{P\subset Q\subset R}(-1)^{\hb_{P,Q}^\L}\,\,\hT_{P,Q}^\L\,\,\tau_Q^R=d_{\oD_P^R}(\L)\ptf$$
\end{corollary}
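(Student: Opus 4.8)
The plan is to recognize the left-hand side, up to a single global sign, as the $(P,R)$ entry of the product $\bs B^{0,\L}=\bs\hT{}^\L\bs\oT^0$, and then to read off its value from Proposition~\ref{combo} taken with $\Lu=0$ and $\Ld=\L$.

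First I would restore the normalizing signs so as to recover the matrix entries $\bs\hT{}^\L_{P,Q}$ and $\bs\oT_{Q,R}^0$. Since $\hg_{P,Q}^\L=a_{P_0}^P+\hb_{P,Q}^\L$, one has $(-1)^{\hb_{P,Q}^\L}\hT_{P,Q}^\L=(-1)^{a_{P_0}^P}\bs\hT{}^\L_{P,Q}$. For the second factor I would use $\tau_Q^R=\oT_{Q,R}^0$ together with the observation that $\ob_{Q,R}^0=a_Q^R$ (every $\hmu$ satisfies $\hmu(0)\le0$), which gives $\tau_Q^R=(-1)^{a_{P_0}^Q+a_Q^R}\bs\oT_{Q,R}^0$.

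The key bookkeeping step is to check that the resulting exponent does not depend on $Q$. Using $a_{P_0}^Q=a_{P_0}^P+a_P^Q$ and $a_P^Q+a_Q^R=a_P^R$ for $P\subset Q\subset R$, the total sign $a_{P_0}^P+a_{P_0}^Q+a_Q^R$ is congruent to $a_P^R$ modulo $2$. Hence the whole sum factors as $(-1)^{a_P^R}\sum_{P\subset Q\subset R}\bs\hT{}^\L_{P,Q}\bs\oT_{Q,R}^0=(-1)^{a_P^R}\bs B^{0,\L}_{P,R}$.

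Finally I would invoke Proposition~\ref{combo} with $\Lu=0$ and $\Ld=\L$. Since $\hmu(0)>0$ holds for no $\hmu$, both subsets $P^{\Lu}$ and $P_{\Lu}$ occurring there reduce to $R$; thus the exponent $a_P^{P^\Lu}$ equals $a_P^R$ and the Kronecker symbol becomes $\delta_{R,R^\L}$, with $R^\L=R_P^\L$ cut out by $\omu(\L)>0$. This symbol equals $1$ exactly when $\omu(\L)>0$ for every $\omu\in\oD_P^R$, that is, it equals $d_{\oD_P^R}(\L)$. Therefore $\bs B^{0,\L}_{P,R}=(-1)^{a_P^R}d_{\oD_P^R}(\L)$, and multiplying by the global factor $(-1)^{a_P^R}$ the two signs cancel, yielding $d_{\oD_P^R}(\L)$. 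I expect the only delicate point to be the sign bookkeeping and the correct evaluation at $\L=0$ of the degenerate subsets entering Proposition~\ref{combo}; once these are settled the identity follows at once.
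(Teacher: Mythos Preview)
Your proof is correct and follows essentially the same route as the paper: specialize Proposition~\ref{combo} to $\Lu=0$, $\Ld=\L$, identify $P_\Lu=R$ so that the exponent becomes $a_P^R$ and the Kronecker symbol becomes $\delta_{R,R^\L}=d_{\oD_P^R}(\L)$, and then verify that the sign discrepancy between the sum in the corollary and the matrix entry $\bs B^{0,\L}_{P,R}$ is the $Q$-independent factor $(-1)^{a_P^R}$. The paper does the sign bookkeeping in one line via the congruence $\hg_{P,Q}^\L-\og_{Q,R}^0\equiv a_P^R+\hb_{P,Q}^\L\pmod 2$, while you unpack it through $a_{P_0}^P+a_{P_0}^Q+a_Q^R\equiv a_P^R$; these are the same computation.
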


\begin{proof}  
Consider the particular case $\Lu=0$ and  $\Ld=\L$. Then $P_\Lu=R$ and Proposition \ref{combo} shows that
$B_{P,R}^{\,0,\L}=0$ unless $R=R^\L$ which holds if and only if $\L$ 
is in the positive chamber with respect to $\oD_P^R$ so that
$$B_{P,R}^{\,0,\L}=(-1)^{a_P^R}d_{\oD_P^R}(\L)\ptf$$
It remains to observe that
$$\hg_{P,Q}^\Ld-\og_{Q,R}^\Lu=a_{P_0}^P+\hb_{P,Q}^\L-a_{P_0}^Q-\hb_{Q,R}^0\equiv a_P^R+\hb_{P,Q}^\L\com{(modulo 2)}
\ptf$$
\end{proof}

\section{Application to the Boulder lemma}\label{prob}

\subsection*{Where ordered partitions disappear} 
Consider an ordered partition $\fp$ of $\oD_P^R$
such that $F_\fp^1=\oD_P^R\bsl\oD_P^Q$ with $P\subsetneq Q\subsetneq R$.
Let $\fq$ be the ordered partition of $\oD_P^Q$ defined by
$$F_\fq^i=F_\fp^{i+1}
\com{for}1\le i\le r(\fq)=r(\fp)-1\ptf$$ 
Then, Langlands' characteristic functions $\phi_\fp^\L$ and $\psi_\fp^\L$ may be rewritten:
$$\phi_\fp^\L=\oT_{Q,R}^\L\,\phi_\fq^\L
\com{and}
\psi_\fp^\L=\tau_{Q}^{R}\,\phi_\fq^\L\ptf\leqno(\star)$$
Moreover
$$
b_\fp^\L=\ob_{Q,R}^\L+b_\fq^\L
\com{and}\alpha^\L_\fp=\alpha^\L_\fq+a_P^Q+1+\ob_{Q,R}^\L\ptf\leqno(\star\star)$$
The next proposition shows how to replace, up to a sign, alternating sums 
over ordered partitions of functions
$\phi_\fp^\L$ by a single characteristic function.  
\begin{proposition} \label{simple}
Let $\hb^\L_{P,R}$ be the number of $\omu\in\oD_P^R$ such that $\omu(\L)\le0$.
Then,
$$\sum_{\fp\in\mathcal P(\oD_P^R)}(-1)^{\alpha^\L_\fp}\phi_\fp^\L=(-1)^{\hb^\L_{P,R}}\hT_{P,R}^\L\ptf$$
\end{proposition}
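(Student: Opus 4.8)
The plan is to argue by induction on the cardinality $a_P^R$ of $\oD_P^R$, reducing the alternating sum over ordered partitions to the two boundary terms of the matrix identity of Corollary~\ref{combi}, which asserts that $\bs\oT{}^\L$ and $\bs\hT{}^\L$ are inverse matrices. When $a_P^R=1$ there is a single ordered partition, with one block; a direct inspection shows $\phi_\fp^\L=\oT_{P,R}^\L=\hT_{P,R}^\L$ (in dimension one the root and its dual are positively proportional) and that $\alpha_\fp^\L$ agrees with $\hb_{P,R}^\L$ modulo $2$, so the identity holds. The degenerate case $P=R$ (empty partition) is likewise immediate, both sides being $1$.

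For the inductive step I would sort the partitions $\fp\in\mathcal P(\oD_P^R)$ by their first block. Writing $F_\fp^1=\oD_P^R\bsl\oD_P^Q$ singles out a subset $Q$ with $P\subseteq Q\subsetneq R$, and the blocks $F_\fp^{2},\dots,F_\fp^{r}$ assemble into an ordered partition $\fq$ of $\oD_P^Q$; the one block partition corresponds to $Q=P$ and is treated on its own. For $P\subsetneq Q\subsetneq R$ relations $(\star)$ and $(\star\star)$ factor each summand as a sign times $\oT_{Q,R}^\L\,\phi_\fq^\L$, and since $a_P^Q<a_P^R$ the induction hypothesis replaces $\sum_{\fq}(-1)^{\alpha_\fq^\L}\phi_\fq^\L$ by $(-1)^{\hb_{P,Q}^\L}\hT_{P,Q}^\L$. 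In this way
\[
\sum_{\fp\in\mathcal P(\oD_P^R)}(-1)^{\alpha_\fp^\L}\phi_\fp^\L
=\varepsilon_P\,\oT_{P,R}^\L+\sum_{P\subsetneq Q\subsetneq R}(-1)^{c(Q)}\,\hT_{P,Q}^\L\,\oT_{Q,R}^\L,
\]
where $\varepsilon_P$ is the sign of the one block term and the exponents $c(Q)$ are those produced by $(\star\star)$ together with the induction hypothesis.

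It remains to collapse the sum over the intermediate range $P\subsetneq Q\subsetneq R$. The point is that, up to a global sign independent of $Q$, the $Q$ summand coincides with the $Q$ summand of $(\bs\hT{}^\L\bs\oT{}^\L)_{P,R}=\sum_{P\subseteq Q\subseteq R}(-1)^{\hg_{P,Q}^\L+\og_{Q,R}^\L}\hT_{P,Q}^\L\oT_{Q,R}^\L$, i.e. with a term of the identity of Corollary~\ref{combi} (the full $\L$ version of equation $(2)$ in Lemma~\ref{basic2}). Since $P\ne R$ that whole alternating sum is $\delta_{P,R}=0$; hence the intermediate sum equals minus the two boundary contributions $Q=P$ and $Q=R$. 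The boundary term $Q=R$ is proportional to $\hT_{P,R}^\L$ and produces the desired right hand side, while the boundary term $Q=P$ is proportional to $\oT_{P,R}^\L$ and is exactly cancelled by the separately treated one block contribution $\varepsilon_P\,\oT_{P,R}^\L$.

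The main obstacle is the sign bookkeeping, and specifically two matching conditions. First, one must check that $c(Q)$ differs from $\hg_{P,Q}^\L+\og_{Q,R}^\L$ by a parity that does not depend on $Q$; here the relation $a_P^R=a_P^Q+a_Q^R$ is what makes the $Q$ dependence disappear. Second, and most delicate, one must verify that this constant global sign, combined with the parity $\ob_{P,R}^\L$ carried by $\oT_{P,R}^\L$, makes $\varepsilon_P$ annihilate the $Q=P$ boundary term while simultaneously turning the $Q=R$ boundary term into precisely $(-1)^{\hb_{P,R}^\L}\hT_{P,R}^\L$. Any off by one in the parities $a_P^Q$, $a_Q^R$, $\ob_{Q,R}^\L$ or $\hb_{P,Q}^\L$ would break the cancellation, so this is the step demanding the most care.
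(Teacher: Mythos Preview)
Your proposal is correct and follows essentially the same route as the paper: sort partitions by their first block via $(\star)$ and $(\star\star)$, apply the induction hypothesis to the residual partition $\fq$, and then recognise the resulting sum as the $(P,R)$ entry of $\bs\hT{}^\L\bs\oT{}^\L$ so that Corollary~\ref{combi} collapses it. The only cosmetic difference is that the paper keeps $Q=P$ inside the range $P\subset Q\subsetneq R$ (with the empty partition contributing $1$), so that only the single boundary term $Q=R$ has to be isolated from the matrix identity, whereas you split off the one--block partition and then cancel it against the $Q=P$ boundary term; your remark that the sign bookkeeping is the delicate point is accurate, and the paper is no more explicit about it than you are.
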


\begin{proof} The above remarks $(\star)$ and $(\star\star)$ show that
$$\sum_{\fp\in\mathcal P(\oD_P^R)}(-1)^{\alpha^\L_\fp}\phi_\fp^\L=
\sum_{P\subset Q\subsetneq R}(-1)^{a_P^Q+1+\ob_{Q,R}^\L}\,\,\oT_{Q,R}^\L
\sum_{\fq\in\mathcal P(\oD_P^Q)}(-1)^{\alpha^\L_\fq}\phi_\fq^\L\ptf
$$
The equivalence of Proposition \ref{simple}
and Corollary \ref{combi} now follows by induction on the cardinal of 
$\oD_P^R$. 
\end{proof}
\subsection*{Proof of the Boulder lemma}
Proposition~\ref{plex} claims an identity that in view of $(\star)$  can be written
$$\sum_{\fp\in\mathcal P(\oD_P^R)}(-1)^{\alpha^\L_\fp}\phi_\fp^\L(H)=d_{\oD_P^R}(\L)-\sum_{P\subset Q\subsetneq R}\tau_Q^R(H)
\sum_{\fq\in\mathcal P(\oD_P^Q)}(-1)^{\alpha^\L_\fq}\phi_\fq^\L(H)\ptf$$
To prove it we have to show that, thanks to Proposition~\ref{simple},
$$(-1)^{\hb^\L_{P,R}}\hT_{P,R}^{\L}(H)=d_{\oD_P^R}(\L)\,-\!\!\!
\sum_{P\subset Q\subsetneq R}\!\!(-1)^{\hb_{P,Q}^\L}\tau_Q^R(H)\hT_{P,Q}^\L(H)$$
but this is nothing than Corollary~\ref{part}.

\section{Truncated Eisenstein series}\label{trunc}
\subsection*{ Series $E''$ versus truncated Eisenstein series}
Here we borrow the notation from \cite{LW}.
Let $G$ be a reductive group over a number field $F$ with minimal parabolic subgroup $P_0$.
Let $P$ be a standard parabolic subgroup with Levi $M$,
 $S$ an associate standard parabolic subgroup and $\Phi$ a cuspidal form on $\mathbf{X}_P$
 and $T$ some parameter in $\ga_0$. Let $\oD=\oD_P^G$
 and consider $\L\in V\otimes\CM$ such that $\Re(\L)$ is far enough inside the positive Weyl chamber (with respect to $\oD$). 
 Langlands introduces in \cite[\S9]{L2} functions 
$$F^{''}_S(x,\Phi,\L)=\sum_{s\in W(\ga_P,\ga_S)}\sum_{\fp\in\mathcal P(\oD)}(-1)^{\alpha_\fp^{\Re(s\L)}}
\phi_\fp^{\Re(s\L)}(H_S(x)-T_S)(M(s,\L)\Phi)(x,s\L)$$
where, by definition,
$$\Phi(x,\L)=e^{<\L+\rho_P,H_P(x)>}\Phi(x)\ptf$$
The reader is warned that 
the dependence on $T$ (denoted $H_0$ by Langlands in \cite{L2}) does not appear 
explicitly on the left hand side.
Then, Langlands introduces
$$E^{''}(x,\Phi,\L)=\sum_S\sum_{\xi\in S(F)\bsl G(F)}F_S^{''}(\xi x,\Phi,\L)$$
where the sum over $S$ runs over parabolic subgroups associated to $P$.
On the other hand Arthur has defined in \cite{A3} a truncation operator denoted $\trunc$.
When applied to the Eisenstein series 
$$E(x,\Phi,\L)=\sum_{\gamma\in P(F)\bsl G(F)}\Phi(\gamma x,\L)$$
then, according to \cite[Proposition 5.4.1]{LW},
one obtains the following expression
$$\trunc\! E(x,\Phi,\L)\!=\!\sum_S\!\!\sum_{s\in W(\ga_P,\ga_S)}\sum_{\xi\in S(F)\bsl G(F)}\!\!\!\!\!\!\!\!\!
(-1)^{a(s)}\!\phi_{M,s}(s\mun\!(H_S(\xi x)-T_S))(M(s,\L)\Phi)(\xi x,s\L)$$
where, by definition,
$$\phi_{M,s}(H)=\hT_{P,G}^{\,\,\Re(s\L)}(sH)$$
and the integer $a(s)$ is the number of simple roots $\omu\in\oD_S$ such that $\omu(s\L)<0$.

\begin{proposition}\label{pareil}
$$E^{''}(x,\Phi,\L)=\trunc\! E(x,\Phi,\L)\ptf$$ 
\end{proposition}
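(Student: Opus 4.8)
The plan is to compare the two sides termwise. Both $E''(x,\Phi,\L)$ and $\trunc E(x,\Phi,\L)$ are sums over a standard parabolic $S$ associate to $P$, an element $s\in W(\ga_P,\ga_S)$ and a coset $\xi\in S(F)\bsl G(F)$, and in each the analytic factor is one and the same intertwined form $(M(s,\L)\Phi)(\xi x,s\L)$. Writing out $E''$ through the definition of $F_S''$ and interchanging the order of summation (legitimate since for fixed $x$ the characteristic functions are supported on finitely many $\xi$), it suffices to fix $S$ and $s$ and to establish the combinatorial identity
$$\sum_{\fp\in\mathcal P(\oD)}(-1)^{\alpha_\fp^{\Re(s\L)}}\phi_\fp^{\Re(s\L)}(H)=(-1)^{a(s)}\phi_{M,s}(s\mun H),\qquad H=H_S(\xi x)-T_S.$$

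First I would note that the left-hand side is exactly the alternating sum over ordered partitions that Proposition~\ref{simple} was devised to collapse. Applying that proposition with $R=G$ and with $\Re(s\L)$ in place of $\L$ turns it into a single characteristic function,
$$\sum_{\fp\in\mathcal P(\oD)}(-1)^{\alpha_\fp^{\Re(s\L)}}\phi_\fp^{\Re(s\L)}(H)=(-1)^{\hb^{\Re(s\L)}_{P,G}}\hT_{P,G}^{\Re(s\L)}(H),$$
so that the ordered partitions disappear altogether. On the other side, the very definition $\phi_{M,s}(H')=\hT_{P,G}^{\Re(s\L)}(sH')$ gives, after $s$ cancels against $s\mun$,
$$\phi_{M,s}(s\mun H)=\hT_{P,G}^{\Re(s\L)}(H).$$
Hence the two characteristic functions are literally the same, and the whole proposition comes down to the single sign congruence $\hb^{\Re(s\L)}_{P,G}\equiv a(s)\pmod 2$.

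The remaining step, and the one genuine obstacle, is this matching of signs, which must be carried out with attention to conventions. Since $\Re(\L)$ lies far inside the positive chamber of $\oD_P^G$, the vector $\Re(s\L)=s\,\Re(\L)$ is regular, so no simple root vanishes on it and the defining inequality $\omu(\Re(s\L))\le0$ of $\hb^{\Re(s\L)}_{P,G}$ may be taken strict. Both integers then count simple roots on which $\Re(s\L)$ is strictly negative, $\hb^{\Re(s\L)}_{P,G}$ ranging over $\oD_P^G$ and $a(s)$ over the a priori different basis $\oD_S$; the relation $\langle\omu,s\L\rangle=\langle s\mun\omu,\L\rangle$ together with the orthogonality of $s$ transports one count to the other. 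Verifying that this transport preserves parity --- equivalently, reconciling Langlands' $P$-indexed conventions with Arthur's $S$-indexed ones through $s$ --- is precisely the translation between the two formalisms promised in the introduction. With the congruence in hand the termwise identity holds, and summing over $S$, $s$ and $\xi$ yields $E''(x,\Phi,\L)=\trunc E(x,\Phi,\L)$.
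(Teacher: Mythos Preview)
Your approach is essentially the paper's: reduce to a termwise identity in $S$, $s$, $\xi$, then collapse the alternating sum over ordered partitions via Proposition~\ref{simple}, leaving only the sign $(-1)^{\hb^{\Re(s\L)}_{P,G}}$ to be matched with $(-1)^{a(s)}$. The paper's own proof is terser --- it simply states the termwise identity and declares it ``a particular case of Proposition~\ref{simple}'' --- so you are if anything more explicit, in particular in isolating the sign congruence as the residual step; your transport-by-$s$ justification of that congruence is a bit sketchy (the map $\omu\mapsto s^{-1}\omu$ need not carry $\oD_S^G$ to $\oD_P^G$), but the paper does not spell this out either.
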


\begin{proof} It suffices to observe that if $\Re(\L)$ is in the positive chamber,
then for $H\in \ga_P$ and $s\in W(\ga_P,\ga_S)$ one has 
$$\sum_{\fp\in\mathcal P(\oD)}(-1)^{\alpha_\fp^{\Re(s\L)}}\phi_\fp^{\Re(s\L)}(sH)=(-1)^{a(s)}\phi_{M,s}(H)\ptf$$
This assertion is a particular case of Proposition~\ref{simple} 
where $\oD_0$ is the set of simple roots of $G$
with respect to the minimal parabolic subgroup $P_0$ and $\oD=\oD_P^G$.
 \end{proof}

\subsection*{On scalar products}
A formula for the scalar product of two series $E''$
is stated by Langlands \cite[Section 9]{L2} without proof.
Arthur proves a formula \cite[Lemma 4.2]{A3} for the scalar product of two truncated Eisenstein series
which is said (again without proof) to be equivalent to Langlands' one. 
As already observed this equivalence is not obvious since ordered partitions 
that show up in Langlands' formula never occur in Arthur's statement.
Now, this follows from \ref{pareil}. This equivalence of the formulas for scalar products can also be
proved directly using again Proposition~\ref{simple}. We leave it to the reader.
We observe that a proof of the scalar product formula simpler than in \cite{A3}
is given in \cite[Chapter 5]{LW}.

\bibliographystyle{amsalpha}
\bibliography{pleasant}

\end{document}